\documentclass[12pt,leqno]{article}
\usepackage{latexsym,amsfonts,amssymb,amsmath,amscd}
\usepackage{amsthm}
\usepackage{pst-node}
\usepackage{pstricks}
\usepackage{graphicx}
\usepackage{xy}
\usepackage{lettrine}
\usepackage[latin1]{inputenc}
\usepackage[english]{babel}
\usepackage{epsfig}
\usepackage{wasysym}
\usepackage{amsfonts}
\usepackage{multicol}
\usepackage{wasysym}
\usepackage{color}
\usepackage{dsfont}
\usepackage{mathrsfs}
\usepackage{enumitem}
\usepackage{subfig} 
\voffset 0pt \hoffset 0pt
\textheight 600pt \textwidth 460pt \topmargin 0pt \oddsidemargin 0pt
\evensidemargin 0pt \marginparwidth 28pt \marginparsep 7pt
\headheight 10pt

\newcommand{\suchthat}{\;\ifnum\currentgrouptype=16 \middle\fi|\;}
\usepackage{epstopdf}

\newtheorem{thm}{Theorem}
\newtheorem{prop}{Proposition}

\newtheorem{lem}{Lemma}

\newcommand{\vect}{\mathrm{Vect}}

\def\liminf{\mathop{\underline{\hbox{lim}}}\limits} 
\def\limsup{\mathop{\overline{\hbox{lim}}}\limits}

\newcommand{\ba}{\begin{array}}
\newcommand{\ea}{\end{array}}

\usepackage[normalem]{ulem}

\begin{document}
\sloppy  

\begin{center} 
{\Large{\textbf{Generalization of the Kimeldorf-Wahba correspondence for \textit{constrained} interpolation}}}
\vspace{0.4cm}

 Xavier Bay$^{\dagger}$, Laurence Grammont$^{\ddagger }$ and Hassan Maatouk$^{\dagger\ddagger}$
 \end{center}
\vskip0.2cm
\begin{center} 
($\dagger$) Mines de Saint-\'Etienne, 158 Cours Fauriel, 42023 Saint-\'Etienne, France\\
($\ddagger$) Universit\'e de Lyon, Institut Camille Jordan, UMR 5208, 23 rue du Dr Paul Michelon, 
42023 Saint-\'Etienne Cedex 2, France \\
bay,maatouk@emse.fr \& laurence.grammont@univ-st-etienne.fr
\end{center}
\vskip2cm

\noindent \textbf{Abstract}\\
In this paper, we extend the correspondence between Bayes' estimation and optimal interpolation in a Reproducing Kernel Hilbert Space (RKHS) to the case of linear inequality constraints such as boundedness, monotonicity or convexity. In the unconstrained interpolation case, the mean of the posterior distribution of a Gaussian Process (GP) given data interpolation is known to be the optimal interpolation function minimizing the norm in the RKHS associated to the GP. In the \textit{constrained} case, we prove that the Maximum \textit{A Posteriori} (MAP) or Mode of the posterior distribution is the optimal \textit{constrained} interpolation function in the RKHS. So, the general correspondence is achieved with the MAP estimator and \textit{not} the mean of the posterior distribution. A numerical example is given to illustrate this last result. 
\bigskip

\noindent {\bf Keywords~: correspondence; interpolation; inequality constraints; Reproducing Kernel Hilbert Space; Gaussian process; Bayesian estimation; Maximum A Posteriori}  

\bigskip

\noindent
{\bf AMS Classification~:} 

\section{Introduction}\label{intro}
Consider a function $y$ defined on a nonempty set $X$ of $\mathbb{R}^d \ (d\geq 1)$. The curve-fitting problem is to estimate $y$ using \textit{a prior} information and a finite set of noise-free evaluations~: 
\begin{equation*}
y\left(x^{(i)}\right)=y_i, \qquad i=1,\ldots,n,
\end{equation*}
where $x^{(1)},\ldots,x^{(n)}$ are $n$ distinct points of $X$. As in \cite{kimeldorf1970correspondence}, the \textit{prior} information is summarized by a zero-mean Gaussian Process (GP) $\{Y(x)\}_{x\in X}$ with covariance function  
\begin{equation}\label{K}
K(x,x'):=\mathds{E}(Y(x)Y(x')),
\end{equation} 
where $\mathds{E}$ denotes expectation. In this case, the usual Bayesian estimator $\hat{y}$ of $y$ is the mean of the posterior distribution of the GP $\{Y(x)\}_{x\in X}$ given data~:
\begin{equation*}
\hat{y}(x):=\mathds{E}\left(Y(x)\suchthat Y\left(x^{(1)}\right)=y_1,\ldots,Y\left(x^{(n)}\right)=y_n\right).
\end{equation*}
From \cite{Rasmussen:2005:GPM:1162254}, we have the following explicit expression for $\hat{y}$~:
\begin{equation}\label{unconstrKrig}
\hat{y}(x)=\boldsymbol{k}(x)^\top\mathds{K}^{-1}\boldsymbol{y}, \qquad x\in X,
\end{equation}
where $\boldsymbol{k}(x)=\left(K\left(x,x^{(1)}\right),\ldots,K\left(x,x^{(n)}\right)\right)^\top$, $\mathds{K}$ is the matrix 
$\left(K\left(x^{(i)},x^{(j)}\right)\right)_{1\leq i,j\leq n}$ and $\boldsymbol{y}=(y_1,\ldots,y_n)^\top$. \

On the other hand, it is well known (see \cite{wahba1990spline}) that this estimation function \eqref{unconstrKrig} is the unique solution of the following optimization problem~:
\begin{equation}\label{OptInt}\tag{$Q$} 
\min_{h\in H\cap I}\|h\|_H^2,
\end{equation}
where $H$ is the Reproducing Kernel Hilbert Space (see \cite{aronszajn1950}) associated to the positive definite kernel $K$ defined by \eqref{K} and $I$ is  
the set of interpolant functions~:    
\begin{eqnarray}\label{I}
I:=\left\{f\in \mathbb{R}^X~: \ f\left(x^{(i)}\right)=y_i, \ i=1,\ldots,n\right\}.
\end{eqnarray}
This result will be referred to as the correspondence between Bayes' estimation and optimal interpolation in a RKHS or Kimeldorf-Wahba correspondence. \
  
Now, we suppose that the function $y$ is known to satisfy some properties or constraints such as boundedness, monotonicity or convexity. Formally, let $C$ be a closed convex set of $\mathbb{R}^X$ corresponding to such constraints. For instance, $C$ is of the form~: 
\begin{eqnarray*}
C=\left\{f\in\mathbb{R}^X~: \ -\infty\leq a\leq f(x)\leq b\leq +\infty, \ x\in X\right\} & \mbox{(boundedness)},
\end{eqnarray*}
\begin{eqnarray*}
C=\left\{f\in\mathbb{R}^X~: \ \forall x\leq x', \ f(x)\leq f(x')\right\} & \mbox{(monotonicity)},
\end{eqnarray*} 
\begin{eqnarray*}
C=\left\{f\in\mathbb{R}^X~: \ \forall \lambda\in [0,1], \ \forall x,x', \ f(\lambda x+(1-\lambda)x')\leq \lambda f(x)+(1-\lambda)f(x'))\right\} & (\mbox{convexity}).
\end{eqnarray*}
If $H \cap C\cap I \neq \varnothing$, the following convex optimization problem~:
\begin{equation}\label{hopt}\tag{$P$}
\min_{h\in H \cap C\cap I}\|h\|_H^2
\end{equation}
has a unique solution denoted by $h_{opt}$ (see e.g. \cite{doi:10.1137/0909048} and \cite{Utreras85}), which can be seen as the optimal \textit{constrained} interpolation function associated to the knots $x^{(i)}, \ i=1,\ldots,n$.   \
 
In the Bayesian framework, the problem is now to make inference from the conditional distribution of the GP $\{Y(x)\}_{x\in X}$ given $Y\in C$ (\textit{prior} information) and given data $Y\left(x^{(i)}\right)=y_i, \ i=1,\ldots,n$. This conditional distribution can be thought as a truncated multivariate normal distribution but in an infinite dimensional linear space.\

The aim of this paper is to prove that the \textit{constrained} interpolation function $h_{opt}$ solution of problem~\eqref{hopt} is the mode or Maximum \textit{A Posteriori} (MAP) of this posterior distribution $\{Y\suchthat Y\in C\cap I\}$. \

The paper is organized as follows~: in Section~\ref{naturalCorrespondence}, we consider the finite-dimensional case to get insight into the natural correspondence between \textit{constrained} interpolation functions and Bayes' estimators. Section~\ref{MainResult} is devoted to the main result. We approximate the original Gaussian process by a sequence of finite-dimensional Gaussian processes (see e.g. \cite{maatouk:hal-01096751}, \cite{quinonero2007approximation} and \cite{trecate1999finite}). The MAP estimator of the finite-dimensional approximation process is well defined. Furthermore, this sequence of MAP estimators is shown to be convergent to the optimal \textit{constrained} interpolation function solution of problem \eqref{hopt}. As a consequence, we can interpret $h_{opt}$ as the most likely function or mode of the posterior distribution $\{Y\suchthat Y\in C\cap I\}$. This result can be seen as a generalization of the Kimeldorf-Wahba correspondence in the case of curve-fitting (interpolation case) taking into account linear inequality constraints. This new correspondence is illustrated in Section~\ref{NumIll}.\\

\section{The natural correspondence for finite-dimensional Gaussian processes}\label{naturalCorrespondence}
In this section, we assume that $\{Y^m(x)\}_{x\in X}$ is a finite-dimensional or degenerate GP in the sense that~:
\begin{equation}\label{findim}
Y^m(x):=\sum_{j=1}^m\xi_j\phi_j(x), \qquad x\in X,
\end{equation}
where $\{\phi_j, \ 1\leq j\leq m\}$ is a set of $m$ linearly independent functions in $\mathbb{R}^X$ and $\boldsymbol{\xi}=\left(\xi_1,\ldots,\xi_m\right)^\top\in\mathbb{R}^{m}$ is a zero-mean Gaussian vector with covariance matrix $\Gamma_m$ assumed to be invertible. The covariance function of $Y^m$ can be expressed as
\begin{equation}\label{finK}
K_m(x,x')=\phi(x)^\top\Gamma_m\phi(x'),
\end{equation}
where $\phi(x)=\left(\phi_1(x),\ldots,\phi_m(x)\right)^\top$. Let
\begin{equation}\label{finH}
H_m:=\vect\left\{\phi_j, \ 1\leq j \leq m \right\}=\left\{h\in\mathbb{R}^X~: \ \exists (c_1,\ldots,c_m) \in \mathbb{R}^{m} , \ h=\sum_{j=1}^mc_j\phi_j\right\}
\end{equation}
be the linear space spanned by the basis functions $\phi_j$ and consider on $H_m$ the dot product $(h_1,h_2)_m=c_{h_1}^\top\Gamma_m^{-1}c_{h_2}$, where $c_{h_i}$ are the coordinates of $h_i$ with respect to the basis $\{\phi_1,\ldots,\phi_m\}$, $i=1,2$. Since $\Gamma_m\phi(x)$ is the vector of coordinates of $K_m(.,x) \in H_m$ (see equation \eqref{finK}), we have
\begin{equation*}
(h,K_m(.,x))_m=c_h^\top\Gamma_m^{-1}\Gamma_m\phi(x)=c_h^\top\phi(x)=h(x).
\end{equation*}
Hence, $(H_m,(.,.)_m)$ is the RKHS with reproducing kernel $K_m$. In the following proposition, we denote by $\overset{\circ}{\widehat{H_m\cap C}}$ the interior of $H_m \cap C$ in the finite-dimensional space $H_m$. 

\begin{prop}\label{propfin}
Let $\{Y^m(x)\}_{x\in X}$ be a process of the form \eqref{findim} and $H_m$ defined by \eqref{finH} be the RKHS associated with the kernel function $K_m$ given in \eqref{finK}. Let us assume that $C$ is a closed convex subset of $\mathbb{R}^X$ (for pointwise topology) and $\overset{\circ}{\widehat{H_m\cap C}}\cap I$ is nonempty, where $I:=\left\{f\in \mathbb{R}^X~: \ f\left(x^{(i)}\right)=y_i, \ i=1,\ldots,n\right\}$. \

Then, the MAP estimator $\hat{y}_m$ defined as the mode of the posterior distribution of  $\{Y^m\suchthat Y^m\in C\cap I\}$ is well defined and is equal to the \textit{constrained} interpolation function  $h_{opt,m}$ solution of 
\begin{equation*}
\arg\min_{h\in H_m\cap C\cap I}\|h\|_m^2.
\end{equation*} 
\end{prop}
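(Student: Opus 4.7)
The plan is to identify the process $Y^m$ with its coordinate vector $\boldsymbol{\xi}\sim\mathcal{N}(0,\Gamma_m)$ in $\mathbb{R}^m$ via the basis $\{\phi_1,\ldots,\phi_m\}$, translate the interpolation and convex constraints into constraints on $\boldsymbol{\xi}$, and then read off the mode of the resulting truncated Gaussian density. With the matrix $L=\bigl(\phi_j(x^{(i)})\bigr)_{1\le i\le n,\,1\le j\le m}$ and $\widetilde C=\{c\in\mathbb{R}^m:\sum_j c_j\phi_j\in C\}$, the event $\{Y^m\in I\}$ becomes the affine constraint $L\boldsymbol{\xi}=\boldsymbol{y}$, and $\{Y^m\in C\}$ becomes $\{\boldsymbol{\xi}\in\widetilde C\}$. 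The hypothesis $\overset{\circ}{\widehat{H_m\cap C}}\cap I\neq\varnothing$ guarantees that the affine set $A:=\{c:Lc=\boldsymbol{y}\}$ is nonempty and that $A\cap\widetilde C$ has nonempty relative interior in $A$, which is what is needed for the conditional distribution to have positive density on a set of positive Lebesgue measure inside $A$.

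Next I would compute the conditional distribution of $\boldsymbol{\xi}$ given $L\boldsymbol{\xi}=\boldsymbol{y}$. Fixing any affine parametrization $c=c_0+B\boldsymbol{t}$ of $A$ (with $c_0\in A$ and the columns of $B$ spanning $\ker L$), the prior Lebesgue density on $\mathbb{R}^m$ proportional to $\exp\bigl(-\tfrac12 c^\top\Gamma_m^{-1}c\bigr)$ restricts to a Gaussian density on $A$ (in the $\boldsymbol{t}$ coordinates) that remains proportional to $\exp\bigl(-\tfrac12 c^\top\Gamma_m^{-1}c\bigr)=\exp\bigl(-\tfrac12\|h\|_m^2\bigr)$, using the identity $\|h\|_m^2=c_h^\top\Gamma_m^{-1}c_h$ established just before the proposition. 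Conditioning further on $\boldsymbol{\xi}\in\widetilde C$ truncates this density to $A\cap\widetilde C$, and the nonempty-interior assumption makes the resulting normalizing constant both finite and strictly positive, so the MAP is well defined.

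From this explicit density, the MAP is by definition the point of $A\cap\widetilde C$ that maximizes $\exp(-\tfrac12\|h\|_m^2)$, equivalently the minimizer of $\|h\|_m^2$ over $h\in H_m\cap C\cap I$. Since $\|\cdot\|_m^2$ is strictly convex on the finite-dimensional Hilbert space $H_m$ and $H_m\cap C\cap I$ is a nonempty closed convex subset (intersection of the closed convex set $C$ with the closed affine sets $H_m$ and $I$, nonempty by assumption), the minimizer exists and is unique; it is precisely $h_{opt,m}$.

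The main obstacle I foresee is the rigorous handling of the conditioning on $\{L\boldsymbol{\xi}=\boldsymbol{y}\}$, a measure-zero event in $\mathbb{R}^m$. One must either parametrize $A$ and perform a direct change of variables, or invoke regular conditional distributions for Gaussian vectors, in order to justify that the conditional density takes the intrinsic form $\exp(-\tfrac12\|h\|_m^2)$ on $A$ rather than some parametrization-dependent expression. Once this is carefully checked, the identification of the mode with the solution of the constrained interpolation problem is essentially a one-line convex-optimization argument.
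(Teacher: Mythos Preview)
Your proposal is correct and follows essentially the same route as the paper: both identify $Y^m$ with its Gaussian coordinate vector, recognize that the (unconditional) density is proportional to $\exp\bigl(-\tfrac12\|h\|_m^2\bigr)$, restrict to the affine interpolation set and the convex constraint set, and read off the mode as the minimizer of $\|h\|_m^2$. The only cosmetic difference is that the paper transports Lebesgue measure to $H_m$ via the isomorphism $i:c\mapsto\sum_j c_j\phi_j$ and writes the density directly on the function space, whereas you stay in $\mathbb{R}^m$ and pull the constraints back; your explicit parametrization $c=c_0+B\boldsymbol{t}$ of the affine set is exactly what the paper summarizes by saying that $L_{pos}$ is a density with respect to the $(m-n)$-dimensional volume measure on $H_m\cap I$.
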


\begin{proof}
Remark that the sample paths of $Y^m$ are in $H_m$ by definition. Hence, it makes sense to define the density of $Y^m$ with respect to the uniform reference measure $\lambda_m$ on $H_m$ (m-dimensional volume measure or Lebesgue measure). This density is defined up to a multiplicative constant and to give it an explicit expression, we consider the following linear isomorphism~:
\begin{equation*}\label{iso}
i~:c\in\mathbb{R}^{m} \longmapsto h:=\sum_{j=1}^mc_j\phi_j\in H_m.
\end{equation*}
We can define the measure $\lambda_m$ on $H_m$ as the image measure $\lambda_m:=i(dc)$, where $dc=dc_1\times\ldots \times dc_m$ is the m-dimensional volume measure in $\mathbb{R}^{m}$. So, if $B\in \mathcal{B}(H_m)$ is a Borelian subset of $H_m$, we have
\begin{equation*}
\lambda_m(B)=\int_{\mathbb{R}^m}\mathds{1}_{i^{-1}(B)}(c)dc_1\times\ldots\times dc_m.
\end{equation*} 
To calculate the probability density function (pdf) of $Y^m$, we write 
\begin{equation*}
\mathds{P}\left(Y^m\in B\right)=\mathds{P}\left(\boldsymbol{\xi}\in i^{-1}(B)\right).
\end{equation*} 
Using the fact that $\boldsymbol{\xi}$ is a zero-mean Gaussian vector $\mathcal{N}(0,\Gamma_m)$, we obtain
\begin{eqnarray*}
\mathds{P}\left(Y^m\in B\right)&=&\int_{\mathbb{R}^m}\mathds{1}_{i^{-1}(B)}(c)\frac{1}{\sqrt{2\pi}^m|\Gamma_m|^{1/2}}\exp{\left(-\frac{1}{2}c^\top\Gamma_m^{-1} c\right)}dc\\
&=&\int_{\mathbb{R}^m}\mathds{1}_{B}(i(c))\frac{1}{\sqrt{2\pi}^m|\Gamma_m|^{1/2}}\exp{\left(-\frac{1}{2}\|i(c)\|_m^2\right)}dc.
\end{eqnarray*}
By the transfer formula, we get 
\begin{equation*}
\mathds{P}\left(Y^m\in B\right)=\int_{H_m}\mathds{1}_B(h)\frac{1}{\sqrt{2\pi}^m|\Gamma_m|^{1/2}}\exp{\left(-\frac{1}{2}\|h\|_m^2\right)}d\lambda_m(h).
\end{equation*} 
Hence, the (unconstrained) density of $Y^m$ with respect to $\lambda_m$
is the function 
\begin{equation*}
h\in H_m \longmapsto \frac{1}{\sqrt{2\pi}^m|\Gamma_m|^{1/2}}\exp{\left(-\frac{1}{2}\|h\|_m^2\right)}.
\end{equation*}

Let us now introduce the inequality constraints described by the convex set $C$. In the Bayesian framework, the \textit{prior} is the following truncated pdf (with respect to $\lambda_m$)~:
\begin{equation*}
h\in H_m \longmapsto k^{-1}\mathds{1}_{\left(h\in H_m\cap C\right)}\exp{\left(-\frac{1}{2}\|h\|_m^2\right)},
\end{equation*} 
where $k\neq 0$ (since $\overset{\circ}{\widehat{H_m\cap C}}\neq \varnothing$) is a normalizing constant. Assume $\overset{\circ}{\widehat{H_m\cap C}}\cap I$ is nonempty, the posterior likelihood $L_{pos}$ defined as the pdf of $Y^m$ given data interpolation, is given by 
\begin{equation}\label{Lpos}
L_{pos}(h)=k^{-1}\mathds{1}_{(h\in H_m\cap C\cap I)}\exp{\left(-\frac{1}{2}\|h\|_m^2\right)},
\end{equation} 
where $k\neq 0$ (since $\overset{\circ}{\widehat{H_m\cap C}}\cap I\neq \varnothing$) is a different normalizing constant. Remark that this density $L_{pos}$ is defined with respect to the ($m-n$)-dimensional measure volume induced by $\lambda_m$ on the affine subspace $H_m\cap I$ of $H_m$. By definition, the MAP estimator $\hat{y}_m$ is the solution of the following optimization problem
\begin{equation*}
\arg\max L_{pos}(h)=\arg\min\left(-2 \log L_{pos}(h)\right).
\end{equation*}   
From expression \eqref{Lpos}, the MAP estimator $\hat{y}_m$ is the \textit{constrained} interpolation function $h_{opt,m}$ solution of 
\begin{equation*}
\arg\min_{h\in H_m\cap C\cap I}\|h\|_m^2.
\end{equation*}
\end{proof}

\section{The main result}\label{MainResult}
In a Bayesian statistical framework, the \textit{prior} is the probability distribution of a zero-mean GP $\{Y(x)\}_{x \in X}$ with covariance function $K$ defined by \eqref{K} and assumed to be definite. We suppose that the realizations of $Y$ are in the Banach space $E=\mathcal{C}^0(X)$, the set of continuous functions defined on a compact set $X$. For the sake of simplicity, we suppose that $X=[0,1]$. The results presented in this paper can be generalized to the multi-dimensional case. Let $H$ be the RKHS associated to the positive definite function $K$. Then, $H$ is an Hilbertian subspace of $E$ since
\begin{equation*}
\| h\|_E=\sup_{x\in X}|(h,K(.,x))_H|\leq c\|h\|_H,
\end{equation*}
where $c=\sup_{x\in X}K(x,x)^{1/2}<+\infty$ by continuity of the kernel function $K$. Here, we suppose that we have also \textit{a priori} information such as boundedness, monotonicity or convexity constraints. Assume that these properties are mathematically  described by the set $C$, where $C$ is a closed convex subset of $\mathbb{R}^X$ as in Section~\ref{naturalCorrespondence} (a fortiori, $C\cap E$ is also a closed convex set of $E$)\footnote{The application $f\in E \longrightarrow f\in \mathbb{R}^X$ is continuous.}. Finally, let $I$ be the set of data interpolating functions $I=\left\{f\in E~: \ f\left(x^{(i)}\right)=y_i, \ i=1,\ldots,n\right\}$. Our aim is to make inference from the posterior distribution of the Gaussian process $Y$, so we need to handle the conditional distribution 
\begin{equation*}
\left\{Y \suchthat Y \in C \ \mbox{and} \ Y\left(x^{(i)}\right)=y_i, \ i=1,\ldots,n\right\}.
\end{equation*}
 
\subsection{Approximation of the Gaussian process $Y$}   
Keeping in mind Section~\ref{naturalCorrespondence}, we approximate the GP $Y$ by the following finite-dimensional Gaussian process~: 
\begin{equation}\label{YN}
Y^N(x):=\sum_{j=0}^{N}Y(t_{N,j})\phi_{N,j}(x), \qquad x\in X,
\end{equation}
where $0=t_{N,0}\leq t_{N,1}\leq \ldots \leq t_{N,N}=1$ is a graded subdivision of $X=[0,1]$ such that $\delta_N=\max\{\vert t_{N,j+1}-t_{N,j}\vert, \ j=0,\ldots,N-1\} \underset{N\to +\infty} \longrightarrow 0$ and $\phi_{N,j}$ are the associated piecewise linear functions (or hat functions) such that $\phi_{N,j}(t_{N,i})=\delta_{ij}, \  0\leq i,j\leq N$, where $\delta_{ij}$ is the Kronecker's Delta function. Note that $\boldsymbol{\xi}:=\left(Y(t_{N,0}), \ldots,Y(t_{N,N})\right)^\top$ is a zero-mean Gaussian vector. By continuity of the sample paths of $Y$ and continuous piecewise linear approximation in the Banach space $E=\mathcal{C}([0,1])$, $Y^N$ converges uniformly to the original GP $Y$ when $N$ tends to infinity with probability one.\

To simplify the proof of the main result (see Theorem~\ref{CorrespThm} below), block matrix structures will be used. To get this structure, we rename the knots of the partition $\Delta_N=\{ t_0, \ldots, t_N\}$  such that 
 \begin{eqnarray}\label{gradedpartition}
\Delta_{N+1}=\Delta_{N} \cup \{ t_{N+1}\}.
\end{eqnarray}
The finite-dimensional approximation of Gaussian Processes (GPs) can be rewritten as 
\begin{equation*}\label{YNn}
Y^N(x):=\sum_{j=0}^{N}Y(t_j)\varphi_{N,j}(x),
\end{equation*}
where $\varphi_{N,j}$ is the hat function associated to the knot $t_j$. \

From Section~\ref{naturalCorrespondence}, $Y^N$ is a finite-dimensional GP with covariance function 
$$K_N(x,x')=\sum_{k,\ell=0}^NK(t_{k},t_{\ell})\varphi_{N,k}(x)\varphi_{N,\ell}(x')=\varphi(x)^\top\Gamma_N\varphi(x'),$$ where $\Gamma_N:=(K(t_k,t_{\ell}))_{0\leq k,\ell\leq N}$. Note that $\Gamma_N$ is invertible since $K$ is assumed to be definite. The corresponding RKHS is $H_N:=\vect\{\varphi_{N,j}, \ j=0,\ldots,N\}$ with the norm given by $\|h\|_{H_N}:=c_h^\top \Gamma_N^{-1} c_h$, where $c_h=\left(h(t_0),\ldots,h(t_N)\right)^\top$.\

Now, we can compute the posterior likelihood function and the mode (or MAP) estimator $\hat{y}_N$ as a function defined on $X$.  

\begin{prop}\label{prop14}
If $\overset{\circ}{\widehat{H_N\cap C}}\cap I \neq \varnothing$, the convex optimization problem
\begin{equation}\label{finiteOptProb}\tag{$P_N$}
\min_{h\in H_N\cap C\cap I}\|h\|_{H_N}^2
\end{equation} 
has a unique solution denoted by $h_{opt,N}$. Additionally, the posterior likelihood function of $Y^N$ incorporating inequality constraints and given data is of the form
\begin{equation}\label{finitePostDis}
L_{pos}^N(h)=k_N^{-1}\mathds{1}_{h\in H_N\cap C\cap I}\exp\left(-\frac{1}{2}\|h\|^2_{H_N}\right),
\end{equation}
where $k_N$ is a normalizing constant. Then, the MAP estimator $\hat{y}_N$ of the posterior distribution \eqref{finitePostDis} is the solution $h_{opt,N}$ of the problem \eqref{finiteOptProb}. 
\end{prop}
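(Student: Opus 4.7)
The plan is to recognize that Proposition \ref{prop14} is essentially a direct instance of Proposition \ref{propfin} applied to the specific finite-dimensional approximating process $Y^N$ of \eqref{YN}. First I would match the notation: take $m = N+1$, basis $\{\phi_j := \varphi_{N,j}\}_{j=0}^{N}$, and coordinate vector $\boldsymbol{\xi} := (Y(t_0),\ldots,Y(t_N))^\top$. Then $Y^N$ has precisely the form \eqref{findim}. I would then check the hypotheses of Proposition \ref{propfin}: the hat functions are linearly independent since $\varphi_{N,j}(t_i) = \delta_{ij}$; the vector $\boldsymbol{\xi}$ is zero-mean Gaussian with covariance $\Gamma_N = (K(t_k,t_\ell))_{0 \leq k,\ell \leq N}$; and $\Gamma_N$ is invertible because $K$ is assumed definite. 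The RKHS structure on $H_N$ with squared norm $\|h\|^2_{H_N} = c_h^\top \Gamma_N^{-1} c_h$ is then exactly the one identified in Section \ref{naturalCorrespondence}, and the nonemptiness condition $\overset{\circ}{\widehat{H_N\cap C}}\cap I \neq \varnothing$ is assumed.

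Next, for existence and uniqueness of $h_{opt,N}$, the plan is to invoke the standard finite-dimensional projection theorem: $H_N \cap C \cap I$ is a nonempty (it contains $\overset{\circ}{\widehat{H_N\cap C}}\cap I$), closed, and convex subset of the finite-dimensional Hilbert space $(H_N,\|\cdot\|_{H_N})$, and $h \mapsto \|h\|^2_{H_N}$ is strictly convex and coercive. Closedness of $H_N \cap C$ for the Hilbert topology follows from closedness of $C$ for pointwise convergence together with continuity of point evaluations on $H_N$ (which are finite linear combinations of the coordinate functionals via $\varphi_{N,j}$); closedness of $H_N \cap I$ is immediate since $I$ is an affine subspace defined by finitely many point-evaluation equations. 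This yields a unique minimizer $h_{opt,N}$.

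Then, to derive \eqref{finitePostDis}, I would repeat verbatim the computation in the proof of Proposition \ref{propfin}: introduce the coordinate isomorphism $i : c \in \mathbb{R}^{N+1} \mapsto \sum_{j=0}^N c_j \varphi_{N,j} \in H_N$, define the reference measure $\lambda_N := i(dc)$, use the Gaussian density of $\boldsymbol{\xi} \sim \mathcal{N}(0,\Gamma_N)$, and apply the transfer formula to obtain the unconstrained density $h \mapsto (2\pi)^{-(N+1)/2}|\Gamma_N|^{-1/2}\exp(-\tfrac{1}{2}\|h\|^2_{H_N})$. Truncating to $H_N \cap C$ (prior) and conditioning on $Y^N \in I$ (data) then gives the posterior likelihood \eqref{finitePostDis}, the hypothesis $\overset{\circ}{\widehat{H_N\cap C}}\cap I \neq \varnothing$ ensuring that the normalizing constant $k_N$ is finite and nonzero (the density being understood with respect to the $(N+1-n)$-dimensional volume measure on the affine subspace $H_N \cap I$).

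Finally, since $-2 \log L_{pos}^N(h) = \|h\|^2_{H_N} + 2\log k_N$ on the support $H_N \cap C \cap I$, maximizing the posterior likelihood is equivalent to minimizing the squared RKHS norm over $H_N \cap C \cap I$, so the MAP estimator $\hat{y}_N$ coincides with $h_{opt,N}$. Because this proposition is really a restatement of Proposition \ref{propfin} for the specific approximating process, there is no genuine obstacle; the only substantive step is the verification that all hypotheses transport to $Y^N$, and in particular that closedness of $C$ in the pointwise topology passes to closedness of $H_N \cap C$ for the Hilbert norm, which justifies the finite-dimensional convex projection argument.
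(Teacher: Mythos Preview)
Your proposal is correct and follows exactly the paper's approach: the paper's own proof consists of the single sentence ``It is a consequence of Proposition~\ref{propfin} of Section~\ref{naturalCorrespondence}.'' You have simply unpacked that reference by matching the data of $Y^N$ to the abstract setup of Proposition~\ref{propfin} and supplying the standard convex-projection argument for existence and uniqueness, which the paper leaves implicit.
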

\begin{proof}
It is a consequence of Proposition~\ref{propfin} of Section~\ref{naturalCorrespondence}.
\end{proof}

According to the uniform convergence of $Y^N$ to $Y$, it is natural to define the MAP estimator $\hat{y}$ of the Gaussian process $Y$ as the limit, if it exists, of the MAP estimator $\hat{y}_N$ of $Y^N$ as N tends to infinity. 

\subsection{Asymptotic analysis}
This subsection is devoted to the main result of the paper. The aim is to prove that the limit $\hat{y}:=\lim\limits_{N\to +\infty}\hat{y}_N$ of the MAP estimator $\hat{y}_N$ of $Y^N$ exists in $E$ and is the optimal \textit{constrained} interpolation function $h_{opt}$ in $H$~:
\begin{equation*}
h_{opt}:=\arg\min_{h\in H\cap C\cap I}\|h\|_H^2,
\end{equation*}
where $H$ is the RKHS associated to the process $Y$, $C$ is the closed convex set of $\mathbb{R}^X$ describing the inequality constraints and $I$ is the set of interpolating functions. 
To reach this goal, we need to analyze the link between the nested linear subspaces $H_N$ in $E$ and the RKHS $H$ associated with the reproducing kernel $K$. To do this, we denote by $\pi_N$ the projection operator from $E$ onto $H_N$ defined by~:
\begin{equation*}
\forall f\in E, \qquad \pi_N (f):=\sum\limits_{j=0}^N f(t_{j})\varphi_{N,j}.
\end{equation*}

\begin{thm}\label{theorem1}
For any $f\in E$, let us define the sequence of real numbers $(m_N(f))_{N\geq 1}$ by
\begin{equation*}
m_N(f):=\|\pi_N(f)\|_{H_N}^2=c_f^\top \Gamma_N^{-1}c_f,
\end{equation*}
where $c_f:=\left(f(t_{0}),\ldots,f(t_{N})\right)^\top$. Then, $(m_N(f))_{N\geq 1}$ is nonnegative and increasing. Furthermore, the RKHS $H$ associated to the covariance function $K$ is characterized by  
\begin{equation*}\label{newdefH}
H=\left\{f\in E~: \  \sup_{N}m_N(f)<+\infty\right\}
\end{equation*}
and, for all $f\in H$,
\begin{equation}\label{normH}
\|f\|_H^2=\sup_Nm_N(f)=\lim_{N\to +\infty}m_N(f)=\lim_{N\to +\infty}\|\pi_N(f)\|_{H_N}^2. 
\end{equation}
In particular, for $f \in H$ and $N \geq 1$,
\begin{equation}\label{stabilitypiN}
 \|\pi_N(f)\|_{H_N}  \leq \|f\|_H.
\end{equation}
\end{thm}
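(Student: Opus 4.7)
The plan is to reinterpret $m_N(f)$ as the minimum squared RKHS norm among all interpolants of $f$ at the knots of $\Delta_N = \{t_0,\ldots,t_N\}$. Precisely, by the classical representer theorem for minimum-norm interpolation in an RKHS, the unique solution of
\[
\min\bigl\{\|h\|_H^2 \;:\; h\in H,\ h(t_j)=f(t_j),\ j=0,\ldots,N\bigr\}
\]
is $g_N = \sum_{j=0}^N \alpha_j K(\cdot,t_j)$ with $\alpha = \Gamma_N^{-1}c_f$, and its squared norm equals $\alpha^\top \Gamma_N \alpha = c_f^\top \Gamma_N^{-1} c_f = m_N(f)$. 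This variational characterization is the engine of the whole proof.

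Granting this identification, nonnegativity is immediate from the positive definiteness of $\Gamma_N^{-1}$, and monotonicity follows from the nested structure $\Delta_N \subset \Delta_{N+1}$ of the partitions: any admissible competitor at level $N+1$ is also admissible at level $N$, whence $m_N(f) \leq m_{N+1}(f)$. For $f\in H$, taking $h=f$ as a competitor yields $m_N(f) \leq \|f\|_H^2$, so $\sup_N m_N(f) \leq \|f\|_H^2 < \infty$ and the stability inequality \eqref{stabilitypiN} already holds.

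The hard part is the reverse direction, which I would handle by a weak compactness argument. Suppose $f \in E$ satisfies $M:=\sup_N m_N(f) < \infty$, and let $g_N \in H$ be the minimum-norm interpolant above, so $\|g_N\|_H^2 = m_N(f) \leq M$. Since $H$ is Hilbert, $(g_N)$ has a weakly convergent subsequence $g_{N_k} \rightharpoonup g \in H$. The reproducing property $(g_{N_k}, K(\cdot,x))_H = g_{N_k}(x)$ converts weak convergence into pointwise convergence: $g_{N_k}(x) \to g(x)$ for every $x \in X$. For each fixed knot $t_j$, as soon as $t_j \in \Delta_N$ (which happens for all $N$ large enough by the refinement \eqref{gradedpartition}) one has $g_N(t_j) = f(t_j)$; passing to the limit along the subsequence yields $g(t_j) = f(t_j)$ on the set $\bigcup_N \Delta_N$. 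This set is dense in $[0,1]$ since $\delta_N \to 0$, and both $f$ and $g \in H \hookrightarrow E$ are continuous, so equality on a dense subset forces $g = f$. Hence $f \in H$, and weak lower semicontinuity of the norm gives $\|f\|_H^2 \leq \liminf_k \|g_{N_k}\|_H^2 \leq M$.

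Combining the two bounds yields $\|f\|_H^2 = \sup_N m_N(f)$; since $(m_N(f))_{N\geq 1}$ is increasing and bounded above by this supremum, it converges to it, which establishes \eqref{normH} and, in particular, \eqref{stabilitypiN}. The most delicate step is the weak compactness argument and the identification $g = f$: it hinges simultaneously on the reproducing property (to pass from weak to pointwise convergence), on the density of $\bigcup_N \Delta_N$ (coming from $\delta_N \to 0$), and on the continuous embedding $H \hookrightarrow E$ (so that $g$ is continuous and agreement on a dense set propagates to $[0,1]$).
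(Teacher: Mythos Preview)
Your proof is correct and follows the same overall architecture as the paper's: identify $m_N(f)$ with $\|g_N\|_H^2$ where $g_N$ is the minimum-norm (equivalently, orthogonal-projection) interpolant of $f$ on $\Delta_N$, use this to get $m_N(f)\le\|f\|_H^2$ for $f\in H$, and run a weak-compactness argument in $H$ for the converse inclusion, passing from weak to pointwise convergence via the reproducing property and concluding by density of $\bigcup_N\Delta_N$ and continuity.

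There are two places where your route differs from the paper's, and in both cases your argument is the cleaner one. For monotonicity, the paper proves a separate block-matrix inequality (their Lemma~\ref{inclemma}) showing directly that $c_f^\top\Gamma_{N+1}^{-1}c_f\ge c_f^\top\Gamma_N^{-1}c_f$; your variational argument (``more interpolation constraints can only raise the minimum'') gets the same conclusion in one line and makes the reason transparent. For the norm identity $\|f\|_H^2=\sup_N m_N(f)$, the paper argues that $\mathrm{span}\{K(\cdot,t_j):j\ge 0\}$ is dense in $H$ (via $F^\perp=\{0\}$), so the projections $g_N$ converge strongly to $f$ and hence $m_N(f)=\|g_N\|_H^2\to\|f\|_H^2$; you instead close the loop with weak lower semicontinuity of the norm, which together with the already-established upper bound $m_N(f)\le\|f\|_H^2$ and monotonicity gives the equality without invoking strong convergence. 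Both devices are standard; yours avoids an extra density step, while the paper's yields the slightly stronger byproduct $g_N\to f$ in $H$.
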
 

\begin{proof}
As $\Gamma_N$ is symmetric positive definite, the sequence $(m_N(f))_N$ is nonnegative. The indexing of the knots (see \eqref{gradedpartition}) leads to the following block structure~: 
\begin{equation*}
\Gamma_{N+1}:=\left(\begin{matrix}
\Gamma_N & \boldsymbol{a}\\
\boldsymbol{a}^\top & K(t_{N+1},t_{N+1})
\end{matrix}
\right), \qquad \mbox{where $a=(K(t_{0},t_{N+1})\ldots,K(t_{N},t_{N+1}))^\top$}.
\end{equation*}
The monotonicity property of the sequence $(m_N(f))_{N\geq 1}$ is a consequence of Lemma~\ref{inclemma} (see Section~\ref{TecLem}). Thus,  
\begin{equation*}
\lim_{N\to +\infty}m_N(f)=\sup_Nm_N(f)\in [0,+\infty]. 
\end{equation*} 
Let us prove that $H \subset\{f\in E~: \ \sup_{N}m_N(f)<+\infty\}$.
Let $f\in H$ and $f_N$ be the orthogonal projection of $f$ onto the space $\vect\left\{K(.,t_{i}), \ i=0,\ldots,N\right\}$ in $H$. Then
\begin{equation*}
\|f_N\|_{H}^2 \leq \|f \|_{H}^2.
\end{equation*}
According to the characterization of the orthogonal projection and the reproducing property in a RKHS, we have $f_N=\displaystyle\sum_{j=0}^N\beta_{N,j}K(.,t_{j})$, where $\beta_N=\left(\beta_{N,0},\ldots,\beta_{N,N}\right)^\top$ is the solution of $\Gamma_N\beta_N=c_f$.  Therefore,
$\beta_N=\Gamma_N^{-1}c_f$ and 
\begin{equation*}
\|f_N\|_{H}^2 = \beta_N^\top \Gamma_N \beta_N = c_f^\top \Gamma_N^{-1}c_f.
\end{equation*}
Hence, $\|f_N\|_{H}^2=m_N(f) \leq  \|f \|_{H}^2 < +\infty$ and $\sup_{N}m_N(f)<+\infty$. \

Let us prove now that $\{f\in E~: \ \sup_{N}m_N(f)<+\infty\} \subset H$. Let $f \in E$ be such that $\sup_{N}m_N(f)<+\infty$. Consider $f_N:=\sum_{j=0}^N\beta_{N,j}K(.,t_{j})$, where $\beta_{N}=\Gamma_N^{-1}c_f$. Then, $f_N\in H$ and $\|f_N\|_H^2=c_f^\top \Gamma_N^{-1}c_f\leq M<+\infty$. Thus, $(f_N)_N$ is a bounded sequence in the Hilbert space $H$. By weak compactness in $H$, it exists $(f_{N_k})_k$ such that $f_{N_k}\underset{k}\rightharpoonup f_{\infty}\in H$. In particular, for all $x \in [0,1]$, $f_{N_k}(x)=(f_{N_k},K(.,x))_H \underset{k}\longrightarrow f_{\infty}(x)$. But, for any fixed $j\geq 1$,  
\begin{equation*}
 f_{N_k}(t_j)=f(t_j), \qquad \text{for $k$ large enough}.
\end{equation*}
Hence, for all $j$, $f_{\infty}(t_j)=f(t_j)$ and $f=f_{\infty}\in H$ by continuity and density of the knots in $[0,1]$. This ends the proof of the first part of the characterization. \

To conclude, let $F$  be defined as $F:=\vect\left\{K(.,t_{j}), \ j \geq 0 \right\}$. If $g\in F^\perp$, we have $\left(g,K(.,t_{j})\right)_H=g(t_{j})=0,  \ j\geq 0$. Hence, by continuity, $g=0$ and $F^\perp=\{0\}$. So, by classical approximation in a Hilbert space, the orthogonal projection $f_N$  of any $f \in H$  onto the subspace $F_N:=\vect\left\{K(.,t_{j}), \ j=0,\ldots,N\right\}$ satisfies  
\begin{eqnarray*}
f_N \underset{N \to +\infty}\longrightarrow f \qquad \mbox{in $H$}.
\end{eqnarray*}
Therefore, $\|f_N\|_{H}^2 = m_N(f) \underset{N \to +\infty}\longrightarrow \|f\|_H^2$, which completes the proof of the theorem. \\
\end{proof}

Now, we can state the main result of the paper. \\
\begin{thm}[{\bf Correspondence between \textit{constrained} interpolation and Bayesian estimation}]\label{CorrespThm}
Under the following assumptions~:
{\em \begin{eqnarray*}
\mbox{(H1)} && \overset{\circ}{\widehat{H\cap C}}\cap I\neq \varnothing,\\ 
\mbox{(H2)} && \forall N, \ \pi_N(C)\subset C,
\end{eqnarray*}}
the convex optimization problem 
\begin{equation}\label{PN}\tag{$P_N$}
\min_{h\in H_N\cap C\cap I}\|h\|_{H_N}^2
\end{equation} 
has a unique solution denoted by $h_{opt,N}$ and 
{\em 
\begin{equation}\label{mainconv}
h_{opt,N}\underset{N\to +\infty}\longrightarrow h_{opt} \qquad \mbox{in $E=\mathcal{C}^0(X).$ }  
\end{equation}}
Furthermore, the MAP estimator $\hat{y}_N$ solution of 
\begin{equation*}
\arg\max_{h\in H_N}L_{pos}^N(h), 
\end{equation*}
where $L_{pos}^N(h)$ is defined in \eqref{finitePostDis}, coincides with $h_{opt,N}$ and we also have
{\em \begin{equation*}
\hat{y}_N\underset{N\to +\infty}\longrightarrow h_{opt} \qquad \mbox{in $E=\mathcal{C}^0(X).$ }
\end{equation*}}
\end{thm}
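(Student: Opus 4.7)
The plan is to dispose of $(P_N)$ by a standard strict-convex minimization argument, then work in the infinite-dimensional RKHS $H$ through an auxiliary kernel-combination $\tilde h_N \in H$ attached to $h_{opt,N}$ at the knots of $\Delta_N$, extract a weak limit in $H$, identify it with $h_{opt}$ by uniqueness of $(P)$, and finally upgrade weak to uniform convergence in $E$ via the RKHS-induced equicontinuity. The MAP identification $\hat{y}_N = h_{opt,N}$ is already in Proposition~\ref{prop14}, so only the two convergence statements need work.

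\textbf{Finite-dimensional step and uniform $H$-bound.} The norm $\|\cdot\|_{H_N}^2$ is strictly convex on the finite-dimensional Hilbert space $H_N$ and $H_N \cap C \cap I$ is closed and convex, so the only nontrivial point is nonemptyness. Refining the graded partition so that $\{x^{(1)},\ldots,x^{(n)}\} \subset \Delta_N$ for $N$ large, any $f_0 \in H \cap C \cap I$ supplied by (H1) yields $\pi_N(f_0) \in H_N \cap I$, which lies in $C$ by (H2); strict convexity then gives the unique minimizer $h_{opt,N}$. Because $H_N$ is not in general a subspace of $H$, I introduce the $H$-representative
\[
\tilde h_N \;:=\; \sum_{j=0}^{N} \beta_{N,j}\, K(\cdot,t_j), \qquad \beta_N = \Gamma_N^{-1}\, c_{h_{opt,N}},
\]
which lies in $H$, coincides with $h_{opt,N}$ at every knot of $\Delta_N$, and satisfies $\|\tilde h_N\|_H^2 = \beta_N^\top \Gamma_N \beta_N = \|h_{opt,N}\|_{H_N}^2$ (as in Theorem~\ref{theorem1}). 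Testing the minimality of $h_{opt,N}$ against the competitor $\pi_N(h_{opt}) \in H_N \cap C \cap I$ and using the stability estimate \eqref{stabilitypiN} gives
\[
\|\tilde h_N\|_H \;=\; \|h_{opt,N}\|_{H_N} \;\le\; \|\pi_N(h_{opt})\|_{H_N} \;\le\; \|h_{opt}\|_H.
\]

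\textbf{Weak limit, uniform convergence, identification.} Weak compactness yields a subsequence $\tilde h_{N_k} \rightharpoonup h^\star$ in $H$, with pointwise convergence on $X$ through the reproducing property. The bound $|\tilde h_{N_k}(x) - \tilde h_{N_k}(x')| \le \|\tilde h_{N_k}\|_H \cdot \|K(\cdot,x) - K(\cdot,x')\|_H$, together with uniform continuity of $K$ on the compact $X \times X$, makes the family $(\tilde h_{N_k})$ equicontinuous; Arzelà--Ascoli upgrades the convergence to uniform convergence $\tilde h_{N_k} \to h^\star$ in $E$. Since $h_{opt,N_k}$ is piecewise linear on $\Delta_{N_k}$ and matches $\tilde h_{N_k}$ at those knots, one has $h_{opt,N_k} = \pi_{N_k}(\tilde h_{N_k})$, and the uniform modulus of continuity of $(\tilde h_{N_k})$ together with $\delta_{N_k} \to 0$ yields $\|h_{opt,N_k} - \tilde h_{N_k}\|_E \to 0$, hence $h_{opt,N_k} \to h^\star$ uniformly in $E$. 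Pointwise closedness of $C$ gives $h^\star \in C$; uniform convergence at each $x^{(i)}$ gives $h^\star \in I$; weak lower semicontinuity of the norm gives $\|h^\star\|_H \le \|h_{opt}\|_H$. Uniqueness of the solution of $(P)$ then forces $h^\star = h_{opt}$, and a standard subsequence argument promotes this to convergence of the entire sequence; Proposition~\ref{prop14} transfers the convergence to $\hat{y}_N$.

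\textbf{Main obstacle.} The delicate point is the passage between $h_{opt,N} \in H_N$ and its $H$-representative $\tilde h_N$: since $H_N \not\subset H$ in general (the hat functions need not lie in the RKHS of a smooth kernel), compactness must be invoked on the $\tilde h_N$, and the residual distance $\|h_{opt,N} - \tilde h_N\|_E = \|\pi_N(\tilde h_N) - \tilde h_N\|_E$ is controlled only because the family $(\tilde h_N)$ turns out to be equicontinuous---a property purchased from the uniform $H$-bound, which in turn rests on (H2) making $\pi_N(h_{opt})$ a legal competitor for every $N$.
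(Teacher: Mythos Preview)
Your argument is correct and reaches the same conclusion, but the route to uniform convergence in $E$ differs from the paper's. The paper inserts the intermediate point $\pi_N(h_{opt})$ and writes
\[
\|h_{opt,N}-h_{opt}\|_E \le \|h_{opt,N}-\pi_N(h_{opt})\|_E + \|\pi_N(h_{opt})-h_{opt}\|_E,
\]
then controls the first term through the $H_N$-norm via Lemma~\ref{lemma1} and the variational inequality for the projection onto the convex set $H_N\cap C\cap I$, obtaining
\[
\|h_{opt,N}-\pi_N(h_{opt})\|_{H_N}^2 \le \|h_{opt}\|_H^2 - \|h_{opt,N}\|_{H_N}^2.
\]
This reduces everything to proving the \emph{value convergence} $\|h_{opt,N}\|_{H_N}^2 \to \|h_{opt}\|_H^2$, for which the auxiliary $\tilde h_N$ and weak compactness are then used; as a byproduct the paper also obtains strong convergence $\tilde h_N \to h_{opt}$ in $H$.

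You bypass the projection inequality and Lemma~\ref{lemma1} entirely: from the uniform $H$-bound on $(\tilde h_N)$ you extract equicontinuity via the reproducing property and the continuity of $K$, invoke Arzel\`a--Ascoli to get $\tilde h_{N_k}\to h^\star$ uniformly, and then transfer this to $h_{opt,N_k}=\pi_{N_k}(\tilde h_{N_k})$ using the common modulus of continuity. This is a genuinely different and arguably more elementary path: it trades the Hilbert-space projection argument for a pure compactness argument in $E$. What you lose is the quantitative estimate \eqref{inefin} and the strong $H$-convergence of $\tilde h_N$; what you gain is that you never need to compare $E$-norms and $H_N$-norms (Lemma~\ref{lemma1}) or manipulate the obtuse-angle inequality. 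Both proofs share the same skeleton---the competitor $\pi_N(h_{opt})$ furnished by (H2), the lift $\tilde h_N$ into $H$, weak compactness, and identification of the limit via uniqueness of $(P)$---so the difference is localized to how the passage from $H$-level information to $E$-convergence is executed.
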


\begin{proof} 
To avoid some technical difficulties, we suppose that the data points belong to $\Delta_N$ for $N$ large enough~:
\begin{eqnarray*}
\mbox{(H0)} && \left\{x^{(i)}, \ i=1,\ldots,n\right\}\subset \Delta_N.
\end{eqnarray*}  
The proof without this last assumption can be found in \cite{bay:hal-01136466} and \cite{maatoukthesis2015}. \ 

Let $g\in H\cap C\cap I$, then $\pi_N(g)\in H_N$. As $\pi_N(C)\subset C$, $\pi_N(g)\in C$ and $\pi_N(g)\in I$ due to (H0). So, $H_N\cap C\cap I$ is a nonempty closed convex subset of $H_N$. Therefore, \eqref{PN} has an unique solution $h_{opt,N}$. Write
\begin{equation*}
\|h_{opt,N}-h_{opt}\|_E \leq \|h_{opt,N}-\pi_N(h_{opt})\|_E+\|\pi_N(h_{opt})-h_{opt}\|_E.
\end{equation*}
 We know from approximation theory in the Banach $E=\mathcal{C}^0(X)$ that 
\begin{equation*}
\|\pi_N(h_{opt})-h_{opt}\|_E \underset{N\to +\infty}\longrightarrow 0.
\end{equation*}
According to the Lemma~\ref{lemma1} of Section~\ref{TecLem},
\begin{eqnarray*}
\|h_{opt,N}-\pi_N(h_{opt})\|_{E}^2  \leq c^2 \|h_{opt,N}-\pi_N(h_{opt})\|_{H_N}^2.  
\end{eqnarray*}
Write now in $H_N$
\begin{equation}\label{numero}
\| h_{opt,N}-\pi_N(h_{opt})\|_{H_N}^2 =\|h_{opt,N}\|_{H_N}^2+\|\pi_N(h_{opt})\|_{H_N}^2-2\left(h_{opt,N},\pi_N(h_{opt})\right)_{H_N}.
\end{equation}
As $h_{opt,N}$ is the orthogonal projection of $0$ onto the convex set $H_N\cap C\cap I$ in the Hilbert space  $H_N$ and $\pi_N(h_{opt})\in H_N\cap C\cap I$, we have
\begin{equation*}
\left(0-h_{opt,N},\pi_N(h_{opt})-h_{opt,N}\right)_{H_N}\leq 0.
\end{equation*}
Therefore, 
\begin{equation*} 
\|h_{opt,N}-\pi_N(h_{opt})\|_{H_N}^2\leq \|\pi_N(h_{opt})\|_{H_N}^2-\|h_{opt,N}\|_{H_N}^2, 
\end{equation*}
so that, by \eqref{stabilitypiN}
\begin{equation}\label{inefin}
\|h_{opt,N}-\pi_N(h_{opt})\|_{H_N}^2\leq \| h_{opt}\|_{H}^2-\|h_{opt,N}\|_{H_N}^2.
\end{equation}
From \eqref{inefin}, it is sufficient to prove 
\begin{equation*}
\|h_{opt,N}\|_{H_N}^2=\min_{h\in H_N\cap C\cap I}\|h\|_{H_N}^2\underset{N\to +\infty}\longrightarrow \|h_{opt}\|_H^2=\min_{h\in H\cap C\cap I}\|h\|_H^2.
\end{equation*}
As $\pi_N(h_{opt}) \in H_N\cap C \cap I$ and by \eqref{stabilitypiN},
\begin{equation*}
\|h_{opt,N}\|_{H_N}^2\leq \|\pi_N(h_{opt})\|_{H_N}^2 \leq \| h_{opt} \|_{H}^2.
\end{equation*}
Hence, 
\begin{equation}\label{proplimsup}
\limsup_N\|h_{opt,N}\|_{H_N}^2\leq \|h_{opt}\|_{H}^2.
\end{equation}
Let $\tilde{h}_N$ be the solution of the problem 
\begin{equation*} \label{rhoN}
\min_{h\in H} \left\{\| h\|^2_{H}~: \ h(t_{j})=h_{opt,N}(t_{j}) , \ j=0,\ldots,N \right\}.
\end{equation*}
It can be expressed as    
\begin{equation*}\label{condGP}
\tilde{h}_N =\boldsymbol{k_N}(.)^\top \Gamma_N^{-1}c_{h_{opt,N}}, 
\end{equation*}
where $\boldsymbol{k_N}(.)=\left(K\left(.,t_0\right),\ldots,K\left(.,t_N\right)\right)^\top$.  Then, we get $\|\tilde{h}_N\|_H=c_{h_{opt,N}}^{\top}\Gamma_N^{-1}c_{h_{opt,N}}=\|h_{opt,N}\|_{H_N}$. By \eqref{proplimsup}, $(\|\tilde{h}_N\|_H)_N$ is a bounded sequence in $H$. By weak compactness, there exists a sub-sequence $\tilde{h}_{N_k}$ such that
\begin{equation}\label{weakcompact}
\tilde{h}_{N_k}\underset{k\to +\infty}\rightharpoonup h_{\infty}\in H, \qquad \text{(weak convergence)}.
\end{equation}
Let us prove that $h_{\infty} \in C$. For fixed $j$ and for $k$ large enough, $\tilde{h}_{N_k}(t_j)=h_{opt,N_k}(t_j)\underset{k\to +\infty}\longrightarrow h_{\infty}(t_j)$. Hence, $\pi_N(h_{opt,N_k})\underset{k\to +\infty}\longrightarrow \pi_N(h_{\infty})$ for any fixed  $N\geq 1.$  
As $H_N\cap C$ is closed in $H_N$ and  $\pi_N(h_{opt,N_k}) \in C$, we have  $\pi_N(h_{\infty})\in C.$ As $\pi_N(h_{\infty})\underset{N\to +\infty}\longrightarrow h_{\infty}$ in $E$ and $C$ is closed in $E=\mathcal{C}^0(X)$, we conclude that $h_{\infty}\in C$. \

Let us show now that $h_{\infty} \in I$. As $x^{(i)}\in \Delta_N$ for $N$ large enough, we get $\tilde{h}_{N_k}\left(x^{(i)}\right)=h_{opt,N}\left(x^{(i)}\right)=y_i$. As $\tilde{h}_{N_k}\left(x^{(i)}\right)=\left(\tilde{h}_{N_k},K\left(.,x^{(i)}\right)\right)_H$ and $\tilde{h}_{N_k}\underset{k\to +\infty}\rightharpoonup h_{\infty}$, we have $h_{\infty}\left(x^{(i)}\right)=y_i$. Hence $h_{\infty}\in I$. \

From property \eqref{weakcompact}, equality $\|\tilde{h}_N\|_H=\|h_{opt,N}\|_{H_N}$ and inequality \eqref{proplimsup}, we have
\begin{equation*}
\|h_{\infty}\|_H^2\leq \liminf_k\|\tilde{h}_{N_k}\|_H^2 \leq \limsup_k\|\tilde{h}_{N_k}\|_{H}^2\leq \|h_{opt}\|_H^2.
\end{equation*}
Since $h_{\infty}\in H\cap C\cap I$, we have also $\|h_{opt}\|_H^2\leq \|h_{\infty}\|_H^2$ so that $\|h_{opt}\|_H^2=\|h_{\infty}\|_H^2$ and thus $\lim_k\|\tilde{h}_{N_k}\|_{H}^2=\|h_{opt}\|_H^2$. Since norm convergence and weak convergence (see \eqref{weakcompact}) imply strong convergence, we have
\begin{equation*}\label{strongcompact}
\tilde{h}_{N_k}\underset{k\to +\infty}\longrightarrow h_{\infty}\in H,
\end{equation*}
and also $\tilde{h}_{N}\underset{N\to +\infty}\longrightarrow h_{\infty}\in H$ by a classical compacity argument.
Hence, $\lim\limits_{N}\|h_{opt,N}\|_{H_{N}}^2=\lim\limits_{N}\|\tilde{h}_{N}\|_{H}^2=\|h_{\infty}\|_{H}^2=\|h_{opt}\|^2_H$. Then from \eqref{inefin}, $\|h_{opt,N}-\pi_N(h_{opt})\|_{H_N}^2  \underset{N\to +\infty}\longrightarrow  0$ and   
$$\|h_{opt,N}-h_{opt}\|_E \underset{N\to +\infty}\longrightarrow  0.$$
The second part is a consequence of Proposition~\ref{prop14}. 
\end{proof}

\paragraph{Comments} 
Remark that assumption (H1) is not restrictive and assumption (H2) is ensured for applications in consideration in this paper (boundedness, monotonicity or convexity constraints). For instance, if $f$ is a non-decreasing function on $[0,1]$, then the piece-wise linear interpolation $\pi_N(f)$ is also non-decreasing for any $N$. For a general convex set $C$, the sequence of approximation $(\pi_N(f))_N$ must be adapted to satisfy assumption (H2). \

Now, the \textit{constrained} optimization problem has a nice probabilistic interpretation as a Bayesian estimator of a function $y\in \mathcal{C}^0(X)$. The function $h_{opt}=\hat{y}:=\lim_N\hat{y}_N$ can be thought as the most likely function in the subspace $C$ of \textit{constrained} functions $h$ satisfying $h\left(x^{(i)}\right)=y_i, \ i=1,\ldots,n$. Theorem~\ref{CorrespThm} proves that this estimator $\hat{y}$ is independent of the choice of the subdivision $\{t_{j}\}$ and is a smooth function since $\hat{y}=h_{opt}$ is the solution of a \textit{constrained} interpolation problem in a RKHS. 

\subsection{Technical lemmas}\label{TecLem}
\begin{lem}\label{inclemma}
Let $B:=\left(\begin{matrix}
A & \boldsymbol{a}\\
\boldsymbol{a}^\top & \alpha
\end{matrix}
\right)$ be a real block matrix where $A$ is an $N\times N$ matrix, $\boldsymbol{a}$ is an $N\times 1$ vector and $\alpha\in \mathbb{R}$. Assume that $B$ is symmetric positive definite. Let $\boldsymbol{y}=(\boldsymbol{x},y_{N+1})^\top$, where $\boldsymbol{x}$ is an $N\times 1$ vector and $y_{N+1}\in\mathbb{R}$. Then,
\begin{equation*}\label{matrixInq}
\boldsymbol{y}^\top B^{-1}\boldsymbol{y}\geq \boldsymbol{x}^\top A^{-1}\boldsymbol{x}. 
\end{equation*}
\end{lem}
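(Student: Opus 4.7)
The plan is to use the block-matrix Schur complement decomposition of $B^{-1}$. Since $B$ is symmetric positive definite, its principal submatrix $A$ is also symmetric positive definite, and the Schur complement $s := \alpha - \boldsymbol{a}^\top A^{-1} \boldsymbol{a}$ is strictly positive (this follows from the standard factorization $\det B = s \cdot \det A > 0$ together with $\det A > 0$).

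Next, I would write down the explicit block inversion formula
\begin{equation*}
B^{-1} = \begin{pmatrix} A^{-1} + \frac{1}{s} A^{-1}\boldsymbol{a}\boldsymbol{a}^\top A^{-1} & -\frac{1}{s} A^{-1}\boldsymbol{a} \\ -\frac{1}{s} \boldsymbol{a}^\top A^{-1} & \frac{1}{s} \end{pmatrix},
\end{equation*}
and substitute $\boldsymbol{y} = (\boldsymbol{x}, y_{N+1})^\top$. After expanding, the cross terms combine into a perfect square and one obtains the identity
\begin{equation*}
\boldsymbol{y}^\top B^{-1} \boldsymbol{y} \;=\; \boldsymbol{x}^\top A^{-1} \boldsymbol{x} \;+\; \frac{1}{s}\bigl(y_{N+1} - \boldsymbol{a}^\top A^{-1} \boldsymbol{x}\bigr)^2.
\end{equation*}
Since $s > 0$, the second term is nonnegative, which gives the announced inequality.

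There is no real obstacle here; the only mild subtlety is to justify the positivity of the Schur complement and the validity of the block inversion formula from the sole assumption that $B$ is symmetric positive definite. Alternatively, one could give a short variational proof: for fixed $\boldsymbol{x}$, the scalar function $y_{N+1} \mapsto (\boldsymbol{x}, y_{N+1})^\top B^{-1} (\boldsymbol{x}, y_{N+1})^\top$ is a strictly convex quadratic whose minimum equals $\boldsymbol{x}^\top A^{-1} \boldsymbol{x}$ (this can be recognized as the quadratic form associated with the marginal covariance of the first block in a Gaussian setting, and matches the computation above). Either route closes the lemma in a few lines.
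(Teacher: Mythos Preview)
Your proof is correct. Both your argument and the paper's hinge on the nonnegativity of the Schur complement $s=\alpha-\boldsymbol{a}^\top A^{-1}\boldsymbol{a}$, but the packaging differs. You invoke the explicit block-inverse formula for $B^{-1}$ and obtain the exact identity
\[
\boldsymbol{y}^\top B^{-1}\boldsymbol{y}=\boldsymbol{x}^\top A^{-1}\boldsymbol{x}+\frac{1}{s}\bigl(y_{N+1}-\boldsymbol{a}^\top A^{-1}\boldsymbol{x}\bigr)^2,
\]
which gives slightly more information (equality is characterized). The paper instead avoids writing $B^{-1}$ explicitly: it sets $\boldsymbol{v}=B^{-1}\boldsymbol{y}=(\boldsymbol{u},v_{N+1})^\top$, expands both $\boldsymbol{y}^\top B^{-1}\boldsymbol{y}=\boldsymbol{v}^\top B\boldsymbol{v}$ and $\boldsymbol{x}^\top A^{-1}\boldsymbol{x}$ in terms of $\boldsymbol{u}$ and $v_{N+1}$, and reduces the comparison to the single scalar inequality $\alpha\ge\boldsymbol{a}^\top A^{-1}\boldsymbol{a}$, which it then checks by evaluating $\boldsymbol{z}^\top B\boldsymbol{z}\ge 0$ at $\boldsymbol{z}=(A^{-1}\boldsymbol{a},-1)^\top$. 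The paper's route is marginally more self-contained (no block-inversion formula needed), while yours is cleaner once that formula is granted; your variational alternative is also valid and is essentially the probabilistic interpretation of the same identity.
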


\begin{proof}[Proof of Lemma~\ref{inclemma}]
Write $\boldsymbol{y}=B\boldsymbol{v}$ with $\boldsymbol{v}=B^{-1}\boldsymbol{y}=\left(\begin{matrix}
\boldsymbol{u}\\
v_{N+1}
\end{matrix} \right)$. By block matrix multiplication, we have
\begin{equation*}
\boldsymbol{x}=A\boldsymbol{u}+v_{N+1}\boldsymbol{a} \quad \text{and} \quad y_{N+1}=\boldsymbol{a}^\top \boldsymbol{u}+\alpha v_{N+1}.
\end{equation*}
Now, $\boldsymbol{y}^\top B^{-1}\boldsymbol{y}=\boldsymbol{v}^\top B\boldsymbol{v}=\boldsymbol{u}^\top A\boldsymbol{u}+2v_{N+1}\boldsymbol{a}^\top \boldsymbol{u}+\alpha v_{N+1}^2$ and $\boldsymbol{x}^\top A^{-1}\boldsymbol{x}=\boldsymbol{u}^\top A\boldsymbol{u}+2v_{N+1}\boldsymbol{a}^\top \boldsymbol{u}+v_{N+1}^2\boldsymbol{a}^\top A^{-1}\boldsymbol{a}$. Comparing expression $\boldsymbol{y}^\top B^{-1}\boldsymbol{y}$ and $\boldsymbol{x}^\top A^{-1}\boldsymbol{x}$, we only need to prove the inequality~: $\alpha\geq \boldsymbol{a}^\top A^{-1}\boldsymbol{a}$. For this, consider the block vector $\boldsymbol{z}=\left(\begin{matrix}
A^{-1}\boldsymbol{a}\\
-1
\end{matrix}\right)$. Since $B$ is positive, $\boldsymbol{z}^\top B\boldsymbol{z}=\boldsymbol{a}^\top A^{-1}\boldsymbol{a}-2\boldsymbol{a}^\top A^{-1}\boldsymbol{a}+\alpha=\alpha-\boldsymbol{a}^\top A^{-1}\boldsymbol{a}\geq 0$. \\ 
\end{proof}

\begin{lem}\label{lemma1}
For any $h\in H_N$, $\|h\|_E\leq c\|h\|_{H_N}$, where $c$ is a constant independent of $N$.  
\end{lem}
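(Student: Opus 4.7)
The plan is to exploit the reproducing structure of $H_N$ combined with a uniform (in $N$) bound on the diagonal of the kernel $K_N$. First, since $(H_N,(\cdot,\cdot)_{H_N})$ is a RKHS with reproducing kernel $K_N$, for every $h\in H_N$ and every $x\in X$ the reproducing property and Cauchy--Schwarz yield
\begin{equation*}
|h(x)|=|(h,K_N(\cdot,x))_{H_N}|\leq \|h\|_{H_N}\,K_N(x,x)^{1/2}.
\end{equation*}
Taking the supremum over $x\in X$ gives $\|h\|_E\leq \|h\|_{H_N}\sup_{x\in X}K_N(x,x)^{1/2}$, so the whole lemma reduces to showing that $\sup_{x\in X}K_N(x,x)$ is bounded by a constant independent of $N$.

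The main (and only real) obstacle is this uniform bound, which is obtained using two properties of the hat functions $\varphi_{N,j}$: they are nonnegative and they form a partition of unity on $[0,1]$, i.e.\ $\sum_{j=0}^{N}\varphi_{N,j}(x)=1$ for every $x$. I would combine this with the elementary Cauchy--Schwarz inequality for covariance kernels, $|K(s,t)|\leq K(s,s)^{1/2}K(t,t)^{1/2}\leq c^2$, where $c:=\sup_{x\in X}K(x,x)^{1/2}$ is the same constant used in the introduction (finite by continuity of $K$ on the compact set $X$). Starting from
\begin{equation*}
K_N(x,x)=\sum_{k,\ell=0}^{N}K(t_k,t_\ell)\,\varphi_{N,k}(x)\varphi_{N,\ell}(x),
\end{equation*}
the nonnegativity of $\varphi_{N,k}$ lets me bound each summand in absolute value by $c^{2}\varphi_{N,k}(x)\varphi_{N,\ell}(x)$, and the partition-of-unity identity gives
\begin{equation*}
K_N(x,x)\leq c^{2}\Bigl(\sum_{k=0}^{N}\varphi_{N,k}(x)\Bigr)^{2}=c^{2}.
\end{equation*}

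Combining the two displays produces $\|h\|_E\leq c\,\|h\|_{H_N}$ with the same constant $c$ as in the unconstrained RKHS $H$, which is the desired uniform estimate. Note that this also gives a clean probabilistic reading: $K_N(x,x)=\mathrm{Var}(Y^N(x))$ is the variance of a convex combination of the $Y(t_j)$'s, and the bound above is exactly the statement that such a variance is controlled by the supremum variance of the underlying process $Y$.
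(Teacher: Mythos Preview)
Your argument is correct and follows essentially the same route as the paper: reproducing property plus Cauchy--Schwarz to reduce to bounding $\sup_x K_N(x,x)$, then the partition-of-unity identity $\sum_{k,\ell}\varphi_{N,k}(x)\varphi_{N,\ell}(x)=1$ together with a uniform bound on $|K(t_k,t_\ell)|$. The only cosmetic difference is that the paper bounds by $M=\max_{x,x'}|K(x,x')|$ directly, whereas you pass through the covariance Cauchy--Schwarz inequality to land on $c^2=\sup_x K(x,x)$; since $M=c^2$ for a positive-definite kernel, the two constants coincide.
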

\begin{proof} 
For $x\in X$, we have
\begin{equation*}
|h(x)|=|(h,K_N(.,x))_{H_N}|\leq \|h\|_{H_N}\times \sqrt{K_N(x,x)},
\end{equation*}
where $K_N(x,x)=\sum_{i,j=0}^N K(u_{N,i},u_{N,j})\phi_{N,i}(x)\phi_{N,j}(x)$. Since $\sum_{i,j=0}^N\phi_{N,i}(x)\phi_{N,j}(x)=1$, we obtain
\begin{equation*}
0\leq \sup_{x\in X}K_N(x,x)\leq M=\max_{x,x'\in X}|K(x,x')|,
\end{equation*}
which completes the proof of the lemma.
\end{proof}

\section{Numerical illustration}\label{NumIll}
The aim of this section is to illustrate the correspondence established in previous sections between the MAP estimator and the \textit{constrained} interpolation function solution of problem \eqref{hopt}. We are interested in the case where the real function $f$ respects boundedness constraints. Thus, the convex set $C$ is equal to~:
\begin{equation*}
C=\left\{f\in\mathcal{C}^0\left([0,1]\right)~: \ -\infty\leq a\leq f(x)\leq b\leq +\infty, \ x\in [0,1]\right\}.
\end{equation*} 

\begin{figure}[hptb]
\begin{minipage}{.5\linewidth}
\centering
\subfloat[]{\label{boundedGP1}\includegraphics[scale=.4]{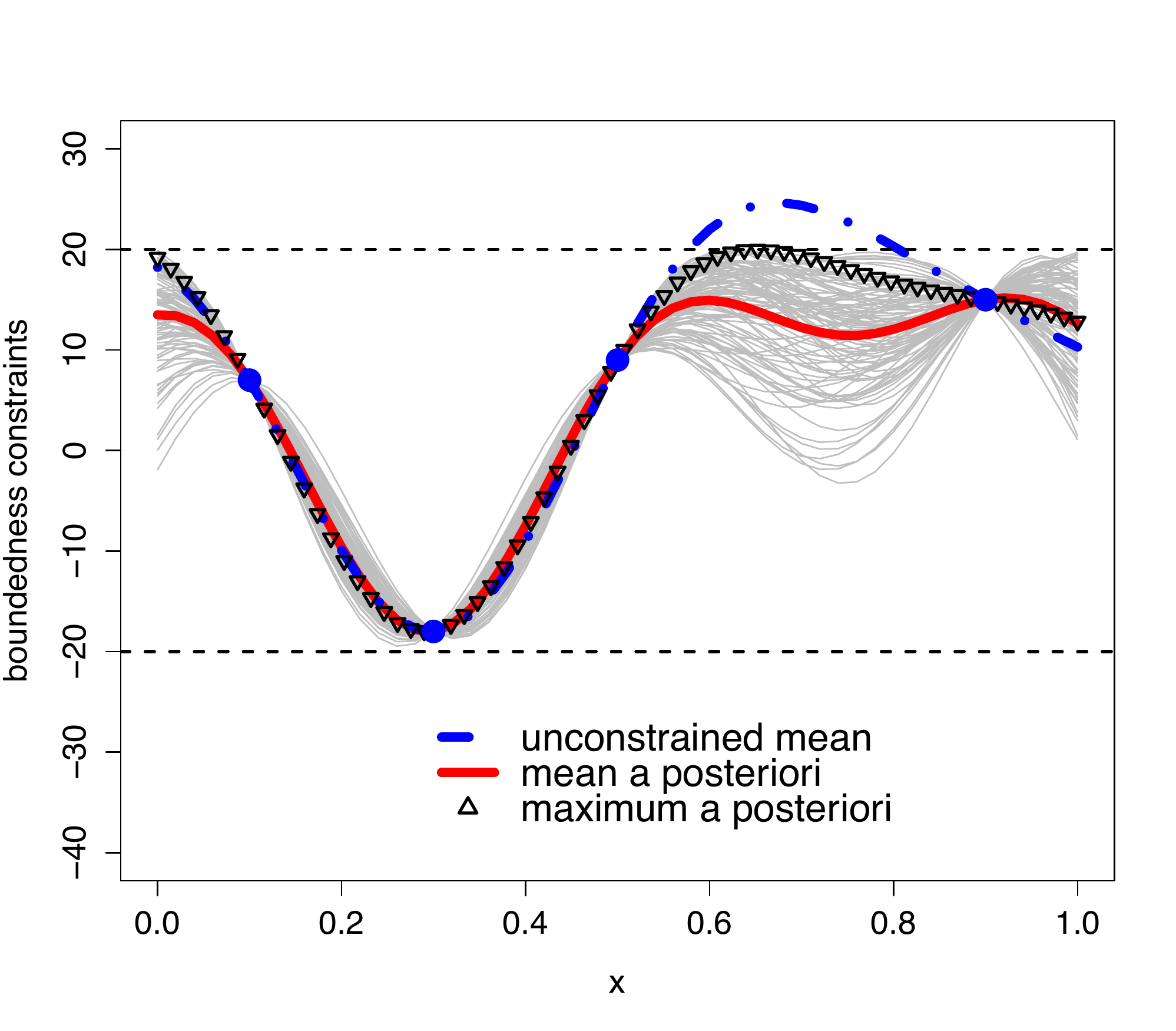}}
\end{minipage}%
\begin{minipage}{.5\linewidth}
\centering
\subfloat[]{\label{boundedGP2}\includegraphics[scale=.4]{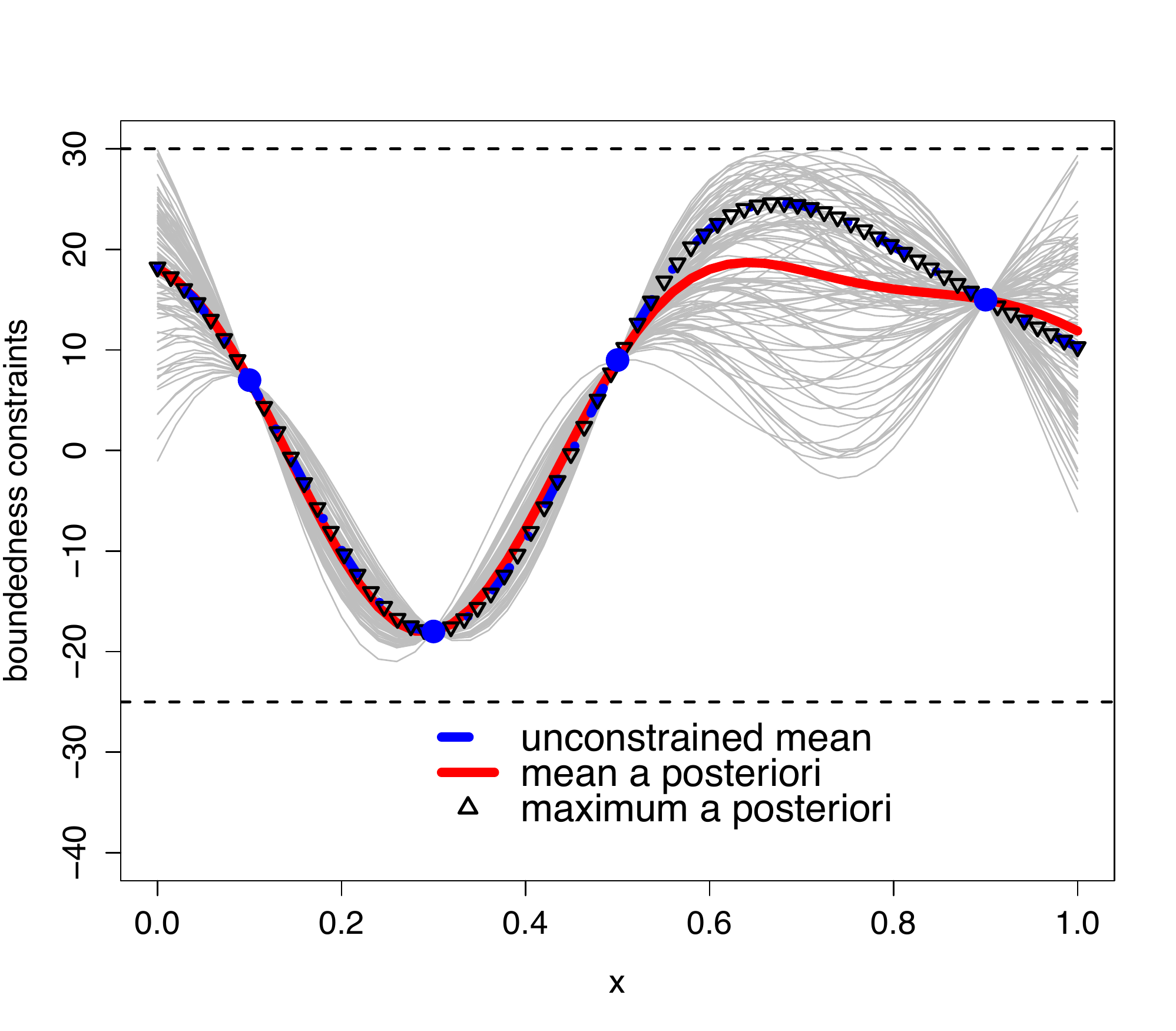}}
\end{minipage}
\caption{Unconstrained and constrained mean together with the maximum \textit{a posteriori} (MAP) estimator using the \textit{constrained} model. The lower and upper bounds are equal to $-25$ and $20$ (Figure~\ref{boundedGP1}) and equal to $-25$ and $30$ (Figure~\ref{boundedGP2}).}
\label{boundedGP}
\end{figure}

Now, we suppose that $f$ is evaluated at $n=4$ design points (see Figure~\ref{boundedGP}) with values in the interval $]-25,20[$ (Figure~\ref{boundedGP1}) and $]-25,30[$ (Figure~\ref{boundedGP2}). In both figures, the Gaussian covariance function is used which is defined as
\begin{equation*}
K(x,x'):=\sigma^2\exp\left(-\frac{(x-x')^2}{2\theta^2}\right),
\end{equation*}
where the hyper-parameters $(\sigma,\theta)$ are fixed to $(25,0.2)$. In Figure~\ref{boundedGP1}, we choose $N=50$ and generate 100 sample paths taken from the finite-dimensional approximation of Gaussian processes \eqref{YN} conditionally to interpolation conditions and boundedness constraints, where the lower and upper bounds are respectively -25 and 20 (the R package `constrKriging' is used in the simulation, see~\cite{maatoukpackage2015} for more details). Notice that the sample paths of the conditional Gaussian process (gray solid line) respect the boundedness constraints in the entire domain unlike the unconstrained mean \eqref{unconstrKrig}. In Figure~\ref{boundedGP2}, we just relax the boundedness constraints such that the unconstrained mean respects it. In that case, the unconstrained mean coincides with the MAP estimator but not with the mean of the simulation (i.e. posterior mean). Hence, in the constrained case, the mean of the posterior distribution does not correspond to the optimal interpolation function.

\begin{figure}
\centering
\includegraphics[scale=0.4]{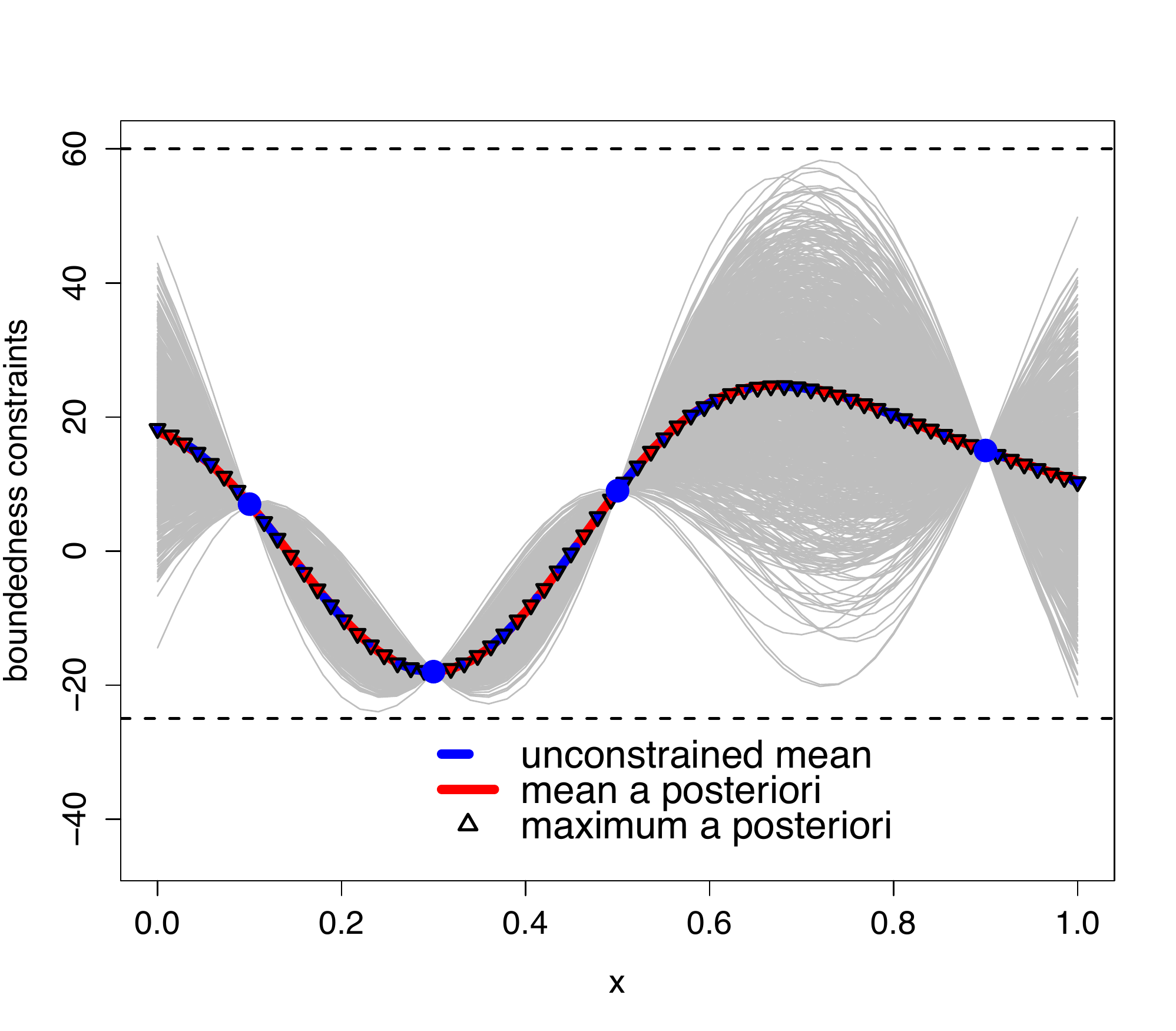}
\caption{1000 sample paths taken from the Gaussian process (gray solid line) respecting boundedness constraints between -25 and 60. The unconstrained mean, the mean and the maximum \textit{a posteriori} coincide.}
\label{unboundedGP}
\end{figure}

In Figure~\ref{unboundedGP}, we also relax the boundedness constraints such that they do not have an impact on the model. In that case, the unconstrained mean, the mean and the maximum of the posterior distribution coincide as expected.

\section{Conclusion}
In this paper, the correspondence between two approaches to solve an interpolation problem in the case of linear inequality constraints is established. On the first hand, a deterministic approach leads to solve a \textit{constrained} optimization problem under both interpolation conditions and inequality constraints in a Hilbert space. On the second hand, a probabilistic approach considers an estimation problem in a Bayesian framework. In the case of a finite-dimensional Gaussian process, the correspondence between the MAP estimator (maximum of the posterior distribution) and the \textit{constrained} interpolation function is proved. In the infinite-dimensional case, the correspondence is done by finite-dimensional approximation and convergence of the MAP estimator to the \textit{constrained} interpolation function. This result can be seen as a generalization of the correspondence established by Kimelford and Wahba in \cite{kimeldorf1970correspondence} between Bayesian estimation on stochastic process and curve fitting.

\section{Acknowledgements}
Part of this work has been conducted within the frame of the ReDice Consortium, gathering industrial (CEA, EDF, IFPEN, IRSN, Renault) and academic partners (\'Ecole des Mines de Saint-\'Etienne, INRIA, and the University of Bern) around advanced methods for Computer Experiments.

\bibliography{bibliographie}
\bibliographystyle{plain}
\end{document}